\newtheorem{teo}{Theorem}[section]
\newtheorem{lem}[teo]{Lemma}
\newtheorem{defin}[teo]{Definition}
\newtheorem{ex}[teo]{Example}
\def \R {\mathbb {R}}
\def \RN {\mathbb{R}^n}
\def \RNN {\mathbb{R}^N}
\def \de {\partial}
\def \G {\mathbb{G}}
\def \g {\mathfrak{g}}
\def \z {\mathfrak{z}}
\def \b {\mathfrak{b}}
\def \Lop {\mathcal{L}}
\def \e {\varepsilon}
\def \L {\Lambda}
\def \l {\lambda}
\def \dl {\delta_\l}
\def \xgrad {\nabla_X}
\def \buz {B_1(0)}
\def \kom {\mathcal{K}_{\Omega}}
\def \kA {\kom^A}
\def \H {\mathbb{H}}
\def \ball {B_R(x_0)}
\def \dball {B_{2R}(x_0)}
\def \hball {B_{\frac{R}{2}}(x_0)}
\def \mlL {M_m(\l,\L)}
\def \BmO {B_{2K}(x)\smallsetminus O}
\newcommand{\Trac}{\mathop{\mathrm{Tr}}} 
\numberwithin{equation}{section}
\title[A certain critical density property]
{A certain critical density property\\for invariant Harnack inequalities in H-type groups}
\author[G. Tralli]{Giulio Tralli}
\address{Dipartimento di Matematica\\Universit\`a di Bologna\\
Piazza di Porta San Donato, 5\\ IT-40126 Bologna\\ Italy}
\email{giulio.tralli2@unibo.it}
\subjclass[2010]{35J70; 35R03, 35H20.}
\keywords{Subelliptic equations; Cordes estimates; H-type groups.}
\begin{document}

\maketitle 

\section*{Abstract} We consider second order linear degenerate-elliptic operators which are elliptic with respect to horizontal directions generating a stratified algebra of H-type. Extending a result in \cite{GT} for the Heisenberg group, we prove a critical density estimate by assuming a condition of Cordes-Landis type. We then deduce an invariant Harnack inequality for the non-negative solutions from a result in \cite{FGL}.

\section{Introduction}

The Lie algebras of type H were introduced by Kaplan in \cite{K}. Arising from the so-called compositions of quadratic forms, they are a class of step two Carnot algebras generalizing the Heisenberg-Weyl algebra. We consider an homogeneous Lie group $\G$ whose algebra is of H-type. In this setting, we fix a basis $X_1,\ldots,X_m$ for the horizontal layer such that it is orthonormal with respect to a peculiar inner product. If $\Omega\subseteq\G$ is an open set, the operators we are going to deal with have the following non-divergence form
\begin{equation}\label{uniXell} 
 \Lop_A=\sum_{i,j=1}^ma_{ij}(x)X_iX_j\qquad\,\,\,\,\mbox{for  }x\in\Omega.
\end{equation}
The matrix $A(x)=(a_{ij}(x))_{i,j=1}^m$ is supposed to be symmetric and uniformly positive definite in $\Omega$. This means there exist $0<\l\leq\L$ such that, for any $x$, we have $$\l\left\|\xi\right\|^2\leq\left\langle A(x)\xi,\xi\right\rangle\leq\L\left\|\xi\right\|^2$$
for every $\xi\in\RNN$. We denote by $\mlL$ the set of the $m\times m$ symmetric matrices satisfying these bounds. We assume that the coefficients $a_{ij}$ are smooth functions, but the estimates we establish are independent of their regularity. The aim of this paper is to prove an invariant Harnack inequality for such degenerate elliptic operators by assuming a Cordes-type condition as
\begin{equation}\label{CordesLandis}
\sup_{x\in\Omega}{\left(\frac{\Trac(A(x))+(Q+2-m)\max_{\left\|\xi\right\|=1}{\left\langle A(x)\xi,\xi\right\rangle}}{\min_{\left\|\xi\right\|=1}{\left\langle A(x)\xi,\xi\right\rangle}}\right)}<Q+4
\end{equation}
where $Q$ is the homogeneous dimension of the group. This condition implies the existence of a positive number $\delta$ (less than $2$) such that
\begin{equation}\label{deltaLandis}
\Trac(A(x))+(Q+2-m)\max_{\left\|\xi\right\|=1}{\left\langle A(x)\xi,\xi\right\rangle}\leq (Q+4-\delta)\min_{\left\|\xi\right\|=1}{\left\langle A(x)\xi,\xi\right\rangle}\,\,\,\,\,\forall\, x\in\Omega.
\end{equation}
The constants appearing in the Harnack inequality for $\Lop_A$ will depend on $A$ just through the constants $\l,\L,\delta$ and the structure of $\G$. We would like to stress that the assumption 
$$\frac{\L}{\l}<\frac{Q+3}{Q+1}$$
implies the previous condition with $\delta=(Q+1)\left(\frac{Q+3}{Q+1}-\frac{\L}{\l}\right)$ for any $A\in\mlL$. Cordes-type estimates in subelliptic settings for operators in non-divergence form are already present in the literature. They have been considered for the problem of interior regularity of $p$-harmonic functions in the Heisenberg group in \cite{DM} and in the Gru{\v{s}}in plane in \cite{DDFM}. The original Cordes condition is different from (\ref{CordesLandis}). Actually, ours is closer in the aspect and in the purposes to the one used by Landis in \cite{L} (see also \cite{La}, Chapter 1, Section 7). That is why we will refer to (\ref{deltaLandis}) as the $\delta$-Landis condition.

In order to get the Harnack inequality, we exploit the axiomatic procedure developed by Di Fazio, Guti\'errez and Lanconelli in \cite{FGL}. The key tools are the \textit{critical density} and the \textit{double-ball property}. These properties, hidden in \cite{KrSa}, arose from the techniques in \cite{C} for uniformly elliptic fully non-linear equations. These notions were then developed for the linearized Monge-Ampère equation in \cite{CG}, where Caffarelli and Gutiérrez proved an invariant Harnack inequality on some suitable sets. For an exhaustive study of these facts, we refer the reader to \cite{G}. The approach in \cite{FGL} can be applied in very general settings and in particular in the setting of homogeneous Carnot groups. As a matter of fact, it has been exploited in \cite{GT} in the Heisenberg group $\H$. For operators as in (\ref{uniXell}) where the $X_j$'s are the usual generators of $\H$, Gutiérrez and Tournier proved the double ball property and, assuming that the matrix $A$ has eigenvalues sufficiently close to $1$, the critical density. In \cite{Tr} we have already investigated the notion of the double ball property and we have proved that it holds true in any Carnot group of step two: in particular, this can be applied in H-type groups.\\
To prove the critical density in our settings, we follow the main points of the arguments introduced by Gutiérrez and Tournier. The key point is the existence of a specific barrier function for $\Lop_A$. Our proof of this fact is different from the one in \cite{GT} and it relies more on the structural properties of the Carnot groups in general and of the H-type algebras in particular. Denoting by $\ball$ some particular homogeneous balls of radius $R$ centered at $x_0$ we will define, our main result is thus the following invariant Harnack inequality.

\begin{teo}\label{main} Suppose $0<\l\leq\L$ and $0<\delta\leq2$. There exist constants $C$ and $\eta$ depending just on $\L,\l,\delta$ and the structure of $\G$ such that, for any $A\in\mlL$ satisfying the $\delta$-Landis condition, if we have a function $u$ with
$$u\geq 0\,\,\,\,\,\mbox{and }\,\,\Lop_A u=0\,\,\,\,\,\mbox{in }B_{\eta R}(x_0)\subset\Omega,$$
then it has to be
$$\sup_{\ball}{u}\leq C\inf_{\ball}{u}.$$
\end{teo}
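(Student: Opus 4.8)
The plan is to obtain Theorem \ref{main} as a direct consequence of the axiomatic machinery of Di Fazio, Guti\'errez and Lanconelli \cite{FGL}: once one knows that the family of operators $\{\Lop_A : A\in\mlL \text{ satisfying }(\ref{deltaLandis})\}$ enjoys both the \emph{critical density property} and the \emph{double-ball property} on the homogeneous balls $\ball$, with constants depending only on $\l,\L,\delta$ and the structure of $\G$, the invariant Harnack inequality follows from their abstract result with $C,\eta$ of the same dependence. So the real content is to verify these two properties uniformly.

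For the double-ball property I would simply invoke \cite{Tr}, where it is established in every Carnot group of step two, hence in particular in H-type groups, for operators of the form (\ref{uniXell}) with $A\in\mlL$; the constants there depend only on $\l,\L$ and the group. The bulk of the work is the critical density, and here I would follow the scheme of Guti\'errez and Tournier adapted to the H-type setting. The heart of that scheme is the construction of a suitable barrier: one needs a function $w$, defined on an annular region $\BmO$ (with $O$ a small inner ball around the center $x_0$), which is nonnegative on the outer boundary, bounded below on the inner ring, and a subsolution $\Lop_A w\ge 0$ for \emph{every} admissible $A$. The natural candidate is a power of the homogeneous norm associated with the H-type structure — something like $w(x)=c_1 - c_2 \|x\|^{-\beta}$ (suitably translated to $x_0$ via the group law), with the exponent $\beta$ chosen in terms of $Q$ and $\delta$. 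Computing $\Lop_A w = \sum a_{ij} X_iX_j w$ on such a radial power and using the explicit form of $X_iX_j\|x\|$ in H-type groups, the sign of $\Lop_A w$ reduces precisely to an algebraic inequality on the eigenvalues of $A$ that is exactly the $\delta$-Landis condition (\ref{deltaLandis}) — the coefficient $Q+2-m$ multiplying the top eigenvalue and the threshold $Q+4$ both emerge from the second horizontal derivatives of the norm (the "$m$" terms from $\sum X_i^2\|x\|^2$-type contributions, the remaining "$Q+2-m$" and the "$+4$" from the gradient-squared and the step-two corrections). This is where the argument differs from \cite{GT}: rather than the Heisenberg-specific computation, I would exploit the structural identities for H-type algebras (orthonormality of $X_1,\dots,X_m$ in the distinguished inner product, Kaplan's $J$-maps, $\|J_z v\|=|z|\,|v|$) to get a clean closed form for the action of $\Lop_A$ on radial functions.

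With the barrier in hand, the critical density follows by the now-standard argument: if $u\ge0$ solves $\Lop_A u=0$ and $|\{u\ge1\}\cap B_{R}(x_0)|\ge \mu\,|B_{R}(x_0)|$ for a small universal $\mu$, one compares $u$ with (a scaled copy of) the barrier on the annulus, uses the maximum principle to propagate a lower bound from where $u$ is large down to the inner ball, and thereby bounds $\inf_{B_{R/2}(x_0)}u$ from below by a universal constant — possibly after an iteration/covering step using the homogeneity $\delta_\lambda$ of $\G$. All constants produced are functions of $\l,\L,\delta$ and the homogeneous dimension/structure constants of $\G$ only.

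I expect the main obstacle to be the barrier construction — specifically, checking that the homogeneous norm of the H-type group (or a power thereof) is regular enough away from the identity and that the second horizontal derivatives $X_iX_j$ of that norm can be estimated precisely enough that the subsolution inequality $\Lop_A w\ge0$ is equivalent to (\ref{deltaLandis}) \emph{with no loss}, uniformly over $A\in\mlL$. The exponent of the norm, the radii defining $O$ and the outer ball, and the threshold $Q+4$ must all fit together exactly; any slack would force a stronger Cordes-type hypothesis than the one stated. Handling the anisotropy between horizontal and vertical variables in these derivative estimates, and keeping track of the dimension-dependent constants $Q$, $m$, and $Q+2-m=\dim(\text{vertical layer})\cdot 2 + \ldots$, is the delicate part; everything downstream (maximum principle, covering, invocation of \cite{FGL} and \cite{Tr}) is routine by comparison.
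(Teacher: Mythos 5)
Your high-level skeleton matches the paper exactly: the Harnack inequality is obtained by feeding the double-ball property (quoted from \cite{Tr}) and the critical density property into the axiomatic machinery of \cite{FGL} (Theorems 4.7 and 5.1 there), with all constants depending only on $\l,\L,\delta$ and $\G$. You also correctly identify that the $\delta$-Landis condition enters precisely through the computation of $X_iX_j$ of powers of the gauge $d$, using the H-type identities (\ref{jayz}) and the orthonormality of $X_1,\dots,X_m$; indeed $Q+2-m=2n+2$ is exactly the coefficient produced by the $\|V\|^2$ and $\sum_k\|J_kV\|^2$ terms, and $Q+4-\delta=\alpha+4$ with $\alpha=Q-\delta$ is the threshold that makes $-d^{-\alpha}$ an $\Lop_A$-subsolution away from its pole.

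However, your critical density argument has a genuine gap. You propose a radial barrier $w=c_1-c_2\|x\|^{-\beta}$ on an annulus and a comparison ``propagating a lower bound from where $u$ is large down to the inner ball.'' That scheme requires knowing \emph{where} the set $\{u\ge 1\}$ sits (e.g.\ that it contains a ball or surrounds the inner region), but the hypothesis of the critical density only gives its \emph{measure}. A single radial comparison cannot convert measure information into a pointwise bound, and the vague appeal to ``an iteration/covering step'' does not fill this; in the uniformly elliptic theory this conversion is exactly what the ABP estimate does. The paper's substitute (following \cite{GT}) is to integrate the kernel $-\frac{1}{\alpha}d^{-\alpha}(x^{-1}\circ\xi)$ over the sublevel set $O=\{w<0\}$ of an auxiliary function $w=\frac{C\l}{8(Q+2)\L}(u+\phi-1)$ with $\phi=d^4$, producing a potential $h$ with two competing features: $\Lop_A h_\e\ge C\l$ on compact subsets of $O$ (Lemma \ref{BAR}, where the Landis condition is used), and the pointwise lower bound $h\ge-\gamma|O|^{1-\alpha/Q}$ of (\ref{compmes}). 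Picone's maximum principle applied to $h_\e-w$ on $O$ then forces $|O|$, hence $|\{u<1\}\cap B_1(0)|$, to be bounded below whenever $u$ dips under $\tfrac12$ in $B_{1/2}(0)$ --- which is the contrapositive of the critical density. Without this potential-theoretic step (or some equivalent ABP-type device), your outline does not close.
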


\noindent Once more we stress that, if we suppose $\frac{\L}{\l}<\frac{Q+3}{Q+1}$, we have an invariant Harnack inequality which is uniform in the class of $A\in\mlL$.

This paper is organized as follows. In Section \ref{defprel}, we first give the main definitions we are going to exploit. Then, we state the theorem about the critical density and we show how Theorem \ref{main} can be deduced from it. In Section \ref{mais}, we prove the critical density estimate. In Section \ref{excom}, we will show some explicit examples and the importance of certain assumptions.

\textbf{Acknowledgments} We do wish to thank Ermanno Lanconelli and Cristian Gutiérrez for the advices and the discussions about the topic.

\section{Definitions and preliminaries}\label{defprel}

Di Fazio, Gutiérrez and Lanconelli presented their axiomatic approach in the abstract setting of doubling quasi metric H\"older space (see \cite{FGL}, Definition 2.2-2.3). Before fixing the specific setting we deal with, we would like to state the critical density property in its abstract form. Following the notations in \cite{FGL}, we fix an H\"older continuous quasi-distance $d$ and a positive measure $\mu$ on a $\sigma$-algebra containing the $d$-balls. We denote by $\kom$ a family of $\mu$-measurable functions with domain contained in an open set $\Omega$. If $u\in\kom$ and its domain contains a set $V\subset\Omega$, we write $u\in\kom(V)$.

\begin{defin}\label{ecd}
We say that $\kom$ satisfies the critical density property if there exist $0<\e<1$ and $c>0$ such that, for every $\dball\subset\Omega$ and for every $u\in\kom(\dball)$ with
$$\mu(\{x\in\ball\,:\,u(x)\geq 1\})\geq \e\mu(\ball),$$
we have
$$\inf_{\hball}{u}\geq c.$$
\end{defin}
The homogeneous Lie groups are a remarkable example of doubling quasi metric H\"older space (\cite{FGL}, Remark 2.5). We can indeed fix as $\mu$ the Lebesgue measure, denoted simply by $\left|\cdot\right|$ in what follows, which is the left and right-invariant Haar measure in such groups (see e.g. \cite{BLU}, Proposition 1.3.21). Moreover, we can choose an homogeneous symmetric norm (see \cite{BLU}, Definition 5.1.1) inducing a quasi-distance $d$. For the procedure we want to exploit, they are needed also a ring condition (\cite{FGL}, Definition 2.6) and a reverse doubling condition: these properties are easily satisfied in the homogenous Lie groups. That is why it is worthwhile to investigate the critical density and the double ball property in such settings.

We now consider an homogenous Lie group $\G=(\RNN,\circ,\dl)$. We suppose that its Lie algebra $\g$ is of type H. Let us give the definition.

\begin{defin}\label{Hdef} An H-type algebra is a finite-dimensional real Lie algebra $(\g,[\cdot,\cdot])$ which can be endowed with an inner product $\left\langle \cdot,\cdot\right\rangle$ such that
$$[\z^{\bot},\z^\bot]=\z,$$
where $\z$ is the center of $\g$. Moreover, for any fixed $z\in\z$, the map $J_z:\z^\bot\longrightarrow\z^\bot$ defined by
$$\left\langle J_z(v),w\right\rangle=\left\langle z,[v,w]\right\rangle\qquad\forall\, w\in\z^\bot$$
is an orthogonal map whenever $\left\langle z,z\right\rangle=1$. We say that a simply connected Lie group is an H-type group if its Lie algebra is an H-type algebra.
\end{defin}
The H-type groups are particular Carnot groups of step two: a stratification is just given by
$$\g=\z^\bot\oplus\z.$$
We are going to denote $\b=\z^\bot$ and $\left\|q\right\|=\left\langle q,q\right\rangle$ for $q\in\g$. Moreover, we put $m=dim(\b)$ and $n=dim(\z)$. The associated homogeneity in the Lie group is thus given by the dilations $\dl((x_{(1)},x_{(2)}))=(\l x_{(1)},\l^2 x_{(2)})$ (for $(x_{(1)},x_{(2)})\in\RNN=\R^m\times\RN$) and the homogeneous dimension is equal to $Q:=m+2n$. \\
We now fix an orthonormal (with respect to $\left\langle \cdot,\cdot\right\rangle$) basis $X_1,\ldots,X_m$ for $\b$ and an orthonormal basis $Z_1,\ldots,Z_n$ for $\z$. Then $X_1,\ldots,X_m,Z_1,\ldots,Z_n$ is an orthonormal basis for $\g$ and we have
$$v+z=\sum_{j=1}^m\left\langle v,X_j\right\rangle X_j+\sum_{k=1}^n\left\langle z,Z_k\right\rangle Z_k\qquad\forall\, v\in\b,\,\,\,\forall\, z\in\z.$$
For any $z\in\z$, the map $J_z$ satisfies, among the others, the following properties
\begin{equation}\label{jayz}
\left\langle J_z(v),v\right\rangle=0\qquad\mbox{and}\qquad\left\|J_z(v)\right\|=\left\|z\right\|\left\|v\right\|\qquad\forall\, v\in\b
\end{equation}
which we will use in the following Section. A proof of these facts and other nice properties of H-type groups can also be found in \cite{DGN} (Section 6) and in \cite{BLU} (Chapter 18).

\begin{ex} As it is well-known, the Heisenberg-Weyl group is a particular H-type group. If $(x_1,\ldots,x_{2k},z)\in\R^{2k+1}$ and the vector fields
$$X_j=\de_{x_j}-\frac{x_{j+k}}{2}\de_{z},\,\,\,\,\,\,\, X_{j+k}=\de_{x_{j+k}}+\frac{x_j}{2}\de_{z}\,\,\,\,\,\,\,\mbox{(for } j\in\{1,\ldots,k\}),\,\,\,\,\,\,\, Z=Z_1=\de_{z}$$
are the usual generators of the Lie algebra, the standard inner product induced by the basis $\{X_1,\ldots,X_{2k},Z\}$ is the one needed for satisfying Definition \ref{Hdef}. Furthermore, in this case the map $J_{(\cdot)}$ is given by
$$J_{cZ}\left(\sum_{j=1}^{2k}a_jX_j\right)=c\sum_{j=1}^k(-a_{j+k}X_j+a_jX_{j+k}).$$
\end{ex}

Following Kaplan's notations, we define the functions $v:\G\longrightarrow\b$ and $z:\G\longrightarrow\z$ by the following relation
$$x=Exp(v(x)+z(x)),\qquad\mbox{for }x\in\G,$$
where $Exp$ denotes the exponential map. We remind that in this situation $Exp$ is a globally defined diffeomorphism with inverse denoted by $Log$. Thus, for any $x\in\G$ we have
$$v(x):=\sum_{j=1}^m\left\langle Log(x),X_j\right\rangle X_j,\qquad\mbox{and}\qquad z(x):=\sum_{k=1}^m\left\langle Log(x),Z_k\right\rangle Z_k.$$
The approach we want to use requires the choice of an homogeneous symmetric norm in order to define a quasi distance $d$ and the $d$-balls. In the H-type groups there are some preferable choices. As a matter of fact, let us consider the function
$$\varphi(x)=(\left\|v(x)\right\|^4+16\left\|z(x)\right\|^2)^{\frac{2-Q}{4}}.$$
Kaplan proved (\cite{K}, Theorem 2) that there exists a positive constant $k$ such that $k\varphi$ is the fundamental solution at the origin of the sub-Laplacian $\Delta_\G=\sum_{j=1}^mX_j^2$. Thus, let us choose as homogeneous symmetric norm the function 
$$d(x):=\left\{\begin{array}{cc}
(\varphi(x))^{\frac{1}{2-Q}} & \,\mbox{if }\,x\neq 0  \\
0 &	\,\mbox{if }\,x=0
\end{array}\right.,$$
which is a $\Delta_\G$-gauge (see \cite{BLU}, Definition 5.4.1 and Proposition 5.4.2). Danielli, Garofalo and Nhieu proved in \cite{DGN} (Theorem 6.8) that $d$ has also the remarkable property to be horizontally-convex. A direct proof of the fact that $d$ is an homogeneous symmetric norm can be found in \cite{BLU} (Remark 18.3.2). The quasi-distance is then given by $d(\xi,x):=d(x^{-1}\circ\xi)$, for any $\xi,x\in\G$, and the $d$-ball of radius $R$ centered at $x_0$ is
$$B_R(x_0)=x_0\circ B_R(0)=x_0\circ\{x\in\G\,:\,\left\|v(x)\right\|^4+16\left\|z(x)\right\|^2<R^4\}.$$
These balls have a great importance in the analysis of the sub-Laplacian and more in general in the geometry of $\G$. As a matter of fact, it is very well-known that a mean-value representation formula holds true on such balls (see e.g. \cite{BLU}, Theorem 5.5.4). Since we are going to use in the next Section the kernel $\psi_0$ of the surface mean-value formula at the origin, for the reader convenience we remind that
\begin{equation}\label{somean}
\psi_0(\xi)=\frac{\left\|\xgrad d(\xi)\right\|^2}{\left\|\nabla d(\xi)\right\|}\qquad\mbox{and}\qquad\frac{\beta}{R^{Q-1}}\int_{\de B_R(0)}\psi_0(\xi)\,d\sigma(\xi)=1
\end{equation}
for some positive constant $\beta$. Here we have denoted by $d\sigma$ the $N-1$-dimensional Hausdorff measure, by $\nabla d$ the gradient of the function $d$ and by $\xgrad d$ its horizontal gradient that is $\xgrad d(\xi)=(X_1d(\xi),\ldots,X_m d(\xi))$ (with an abuse of notations $\left\|\cdot\right\|$ is even the euclidean norm in $\R^m$ and in $\RNN$).

We will prove a critical density estimate in $\G$ for horizontally elliptic operators as in (\ref{uniXell}). To this aim, we have to say which is the set of Lebesgue measurable functions satisfying Definition \ref{ecd}. Keeping in mind the case of uniformly elliptic operators (see \cite{G}, Theorem 2.1.1) and the work by Gutiérrez and Tournier, if $A\in\mlL$ for some $0<\l\leq\L$ we put
\begin{equation}\label{kk}
\kA:=\{u\in C^2(V,\R) : V\subset\Omega, u\geq 0 \mbox{ and } \Lop_A u\leq 0 \mbox{ in } V\}.
\end{equation}
We stress that $\kA$ is closed under multiplications by positive constants as it is required in \cite{FGL} (for example in Theorem 4.7). In the next Section we are going to prove the following theorem.

\begin{teo}\label{cecd}
For every $0<\l\leq\L$ and $0<\delta\leq2$, the family $\kA$ satisfies the critical density property with respect to Definition \ref{ecd} for any $A\in\mlL$ satisfying the $\delta$-Landis condition. The constants $\e$ and $c$ depend on $A$ only through the constants $\l,\L,\delta$.
\end{teo}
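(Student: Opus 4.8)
The plan is to follow the scheme of Guti\'errez--Tournier as adapted to the abstract setting of \cite{FGL}, and the heart of the matter is the construction of a suitable barrier (supersolution) for $\Lop_A$ on an annular region. Concretely, I would work with the $\Delta_\G$-gauge $d$ and seek a function of the form $w(x)=h(d(x))$, or more precisely a combination built from the fundamental-solution exponent, namely something like $w(x)=a\,d(x)^{-\gamma}-b$ on an annulus $B_{2K}(0)\smallsetminus O$ with $O$ a small ball around the origin, tuned so that $w\geq 1$ on the inner boundary, $w\leq 0$ (or $\leq 1/2$) on the outer boundary, and $\Lop_A w\leq 0$ in the annulus. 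The computation of $\Lop_A w$ requires the horizontal Hessian $X_iX_j d$ and $X_iX_j(d^{-\gamma})$; here one uses the explicit form $\varphi=(\|v\|^4+16\|z\|^2)^{(2-Q)/4}$ together with the algebraic identities (\ref{jayz}) for $J_z$ --- in particular $\langle J_z v,v\rangle=0$ and $\|J_z v\|=\|z\|\|v\|$ --- to reduce everything to functions of $\|v(x)\|$ and $\|z(x)\|$. The $\delta$-Landis condition (\ref{deltaLandis}) is exactly what guarantees that, choosing $\gamma$ appropriately (close to $Q-2$ but shifted by an amount governed by $\delta$), the quantity $\sum a_{ij}X_iX_j w$ keeps a favorable sign: the trace term $\Trac(A)$ and the factor $(Q+2-m)$ appearing in (\ref{deltaLandis}) come out precisely from the sub-Laplacian part and from the ``extra'' contribution of the center variables in the H-type Hessian.

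Granting the barrier, I would then run the covering/measure argument for critical density. Fix $\dball\subset\Omega$ and $u\in\kA(\dball)$ with $|\{x\in\ball:u(x)\geq1\}|\geq\e|\ball|$. After a left-translation and dilation we may assume $x_0=0$ and $R=1$, using left- and dilation-invariance of the class $\kA$ and of Lebesgue measure, and the scaling of the gauge. The goal is $\inf_{\hball}u\geq c$. One argues by contradiction / by a pointwise comparison: if $u$ were too small somewhere in $\hball$, then comparing $u$ (a nonnegative supersolution, $\Lop_A u\leq0$) with the barrier $w$ on an annulus, via the minimum principle for $\Lop_A$, forces $u\leq w$ throughout the annulus, hence $u$ stays small on a definite-measure portion of $\ball$; combined with the hypothesis that $u\geq1$ on an $\e$-fraction, and with the reverse doubling / ring properties of the balls $B_R(0)$ noted in the excerpt, this yields a contradiction once $\e$ is chosen small enough depending on $\l,\L,\delta$. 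The minimum principle I use is the classical one for the degenerate-elliptic operator $\Lop_A$, which is legitimate because $A$ is uniformly positive definite on the horizontal layer and the $X_j$ satisfy H\"ormander's condition, so $\Lop_A$ is hypoelliptic and a propagation/strong minimum principle applies on the sets we use.

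The main obstacle, and the step that needs the most care, is the barrier construction: one must verify $\Lop_A w\leq0$ on the whole annulus \emph{for every} $A\in\mlL$ satisfying the $\delta$-Landis condition, with all geometric constants ($K$, the inner radius, $\gamma$, $a$, $b$) chosen uniformly in $A$. This is where the specific structure of H-type algebras enters: the key point is that, although $X_iX_j d$ is not diagonal, the quadratic form $\langle A(x)\xgrad d,\xgrad d\rangle$ and the ``lower order in $A$'' terms can be bounded using only $\max_{\|\xi\|=1}\langle A\xi,\xi\rangle$, $\min_{\|\xi\|=1}\langle A\xi,\xi\rangle$, and $\Trac(A)$, precisely the three quantities appearing in (\ref{deltaLandis}). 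I expect to split $\Lop_A w$ into a radial piece (handled by an ODE inequality in the variable $d$, where the exponent $Q+4-\delta$ versus the homogeneous dimension is what makes the sign work) plus an error controlled by the horizontal Hessian of $d$ off the gauge-diagonal, and to absorb the latter using $\|J_z v\|=\|z\|\|v\|$ and Cauchy--Schwarz; the bookkeeping is delicate but conceptually the $\delta$-Landis inequality is designed so that it closes. Once Theorem \ref{cecd} is in hand, Theorem \ref{main} follows by combining it with the double-ball property from \cite{Tr} (valid in step-two Carnot groups, hence in H-type groups) and the axiomatic Harnack result of \cite{FGL}.
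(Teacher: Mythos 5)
Your local ingredients are broadly on track: the $\delta$-Landis condition does make $\Lop_A(d^{-\gamma})\leq 0$ away from the pole for $\gamma=Q-\delta$, the identities (\ref{jayz}) together with the orthonormality of $X_1,\ldots,X_m,Z_1,\ldots,Z_n$ are exactly what reduce the horizontal Hessian of $d$ to expressions in $\left\|v\right\|$ and $\left\|z\right\|$, and the reduction to $R=1$, $x_0=0$ by left translation and dilation is how the paper proceeds. The gap is in the global architecture of the critical-density argument. A radial barrier $a\,d^{-\gamma}-b$ on an annulus converts \emph{pointwise} information on the inner boundary into pointwise information elsewhere — that is the double-ball mechanism, already available from \cite{Tr} — but it cannot by itself convert the \emph{measure} hypothesis $\left|\{u\geq1\}\cap\ball\right|\geq\e\left|\ball\right|$ into the pointwise conclusion $\inf_{\hball}u\geq c$. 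Your proposed comparison also does not close as stated: with $\Lop_A w\leq0$ and $\Lop_A u\leq0$ both $u$ and $w$ are supersolutions, so no maximum principle yields $u\leq w$; and knowing that $u$ is small at a single point of $\hball$ gives no upper control of $u$ on the whole inner boundary of your annulus, which is what such a comparison would need. (Also, $\e$ must be taken close to $1$, not ``small enough'': one bounds the measure of $\{u<1\}$ from below.)

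The missing idea is the potential-of-the-contact-set barrier. The paper takes $O=\{x\in\buz\,:\,u(x)+d^4(x)<1\}$ — an open subset of $\{u<1\}$, nonempty because $u(\overline{x})<\frac12$ at some $\overline{x}\in B_{1/2}(0)$ — and sets $h(x)=-\frac{1}{\alpha}\int_O \left(d(x^{-1}\circ\xi)\right)^{-\alpha}d\xi$ with $\alpha=Q-\delta$. The content of Lemma \ref{BAR} is that $\Lop_A h\geq C\l>0$ \emph{inside} $O$: the strictly positive constant comes from the boundary term over $\de B_{2K}(0)$ produced by the divergence theorem, recognized as the surface mean-value kernel $\psi_0$, while the Landis condition makes the remaining bulk integrand have a favorable sign. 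Simultaneously one has the elementary estimate $0\geq h\geq-\gamma\left|O\right|^{1-\alpha/Q}$. Comparing $h$ with $w=\frac{C\l}{8(Q+2)\L}(u+\phi-1)$ (which satisfies $\Lop_A w\leq \frac{C}{2}\l$) on $O$ via Picone's maximum principle gives $h\leq w$ in $O$; evaluating at $\overline{x}$, where $w(\overline{x})$ is quantitatively negative, yields a lower bound on $\left|O\right|^{\delta/Q}$ and hence on $\left|\{u<1\}\cap\buz\right|$, which is the contrapositive of the critical density. This measure-from-potential step is the heart of the proof and is absent from your sketch; without it, or an equivalent ABP/growth-lemma substitute worked out in detail, the argument does not go through.
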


Once we have proved this theorem, the proof of Theorem \ref{main} follows straightforwardly from the results in \cite{FGL} and in \cite{Tr}.

\begin{proof}[Proof of Theorem \ref{main}] In the doubling quasi metric H\"older space $(\G,d,\left|\cdot\right|)$ the reverse doubling condition and the ring condition are satisfied. Moreover, if $A\in\mlL$ for some $0<\l\leq\L$, the family $\kA$ satisfies the double ball property by the result in \cite{Tr}. If in addition $A$ satisfies the condition (\ref{deltaLandis}) for some positive $\delta$, the previous theorem tells us that also the critical density is satisfied by $\kA$. By the results in \cite{FGL} (Theorem 4.7, set of conditions (A), and Theorem 5.1), the functions belonging to the following subset of $\kA$
$$\{u\in C^2(V,\R) : V\subset\Omega, u\geq 0 \mbox{ and } \Lop_A u= 0 \mbox{ in } V\}$$
satisfy the invariant Harnack inequality. Since the constants appearing in the critical density and in the double ball property depend on $A$ only through $\l,\L$ and $\delta$, even the constants $C$ and $\eta$ in the Harnack inequality depend just on $\l,\L,\delta$ and the setting $\G$. In particular, they are independent of the regularity of the coefficients of the matrix $A$.
\end{proof}

\section{The main Lemma}\label{mais}

In order to prove Theorem \ref{cecd}, we follow the steps of the proof adopted in \cite{GT}. The crucial point is the existence of a very specific barrier function. To show this fact, let us compute explicitly the horizontal gradient $\xgrad d$ and the horizontal Hessian matrix $(X_iX_j d)_{i,j=1}^m$. We put $\phi=d^4$, which is a smooth function in the whole $\G$. If we define for any fixed $x\in\G$ the functions
$$\phi_j(t)=\phi(x\circ Exp(tX_j))\qquad\mbox{and}\qquad\phi_{i,j}(s,t)=\phi(x\circ Exp(sX_i)\circ Exp(tX_j))$$
for $s,t\in\R$ and $i,j\in\{1,\ldots,m\}$, by definition we have 
$$X_j\phi(x)=\phi_j'(0)\qquad\mbox{and}\qquad X_iX_j\phi(x)=\frac{\de^2}{\de s\de t}\phi_{i,j}(0,0).$$
We remind that the Campbell-Hausdorff formula for step two Lie algebras state that
$$Exp(A)\circ Exp(B)=Exp\left(A+B+\frac{1}{2}[A,B]\right)\qquad\forall\, A,B\in\g.$$
Thus, since $z(x)\in\z$, we get
\begin{eqnarray*}
&\,&Exp\left(\vphantom{\sum}v(x\circ Exp(sX_i)\circ Exp(tX_j))+z(x\circ Exp(sX_i)\circ Exp(tX_j))\right)=\\
&=&x\circ Exp(sX_i)\circ Exp(tX_j)=\\
&=&Exp(v(x)+z(x))\circ Exp\left(sX_i+tX_j+\frac{st}{2}[X_i,X_j]\right)=\\
&=&Exp\left(v(x)+z(x)+sX_i+tX_j+\frac{st}{2}[X_i,X_j]+\frac{s}{2}[v(x),X_i]+\frac{t}{2}[v(x),X_j]\right).
\end{eqnarray*}
Since $v(x)+sX_i+tX_j\in\b$ and $z(x)+\frac{st}{2}[X_i,X_j]+\frac{s}{2}[v(x),X_i]+\frac{t}{2}[v(x),X_j]\in\z$, we deduce
$$v(x\circ Exp(sX_i)\circ Exp(tX_j))=v(x)+sX_i+tX_j\qquad\mbox{and}$$
$$z(x\circ Exp(sX_i)\circ Exp(tX_j))=z(x)+\frac{st}{2}[X_i,X_j]+\frac{s}{2}[v(x),X_i]+\frac{t}{2}[v(x),X_j].$$
For $s=0$, this gives also
$$v(x\circ Exp(tX_j))=v(x)+tX_j\qquad\mbox{and}\qquad z(x\circ Exp(tX_j))=z(x)+\frac{t}{2}[v(x),X_j].$$
Now we have an explicit expression for $\phi_j(t)$ and $\phi_{i,j}(s,t)$. Straightforward calculations show that
$$X_j\phi(x)=4\left\langle \left\|v(x)\right\|^2v(x)+4J_{z(x)}(v(x)), X_j\right\rangle\qquad\mbox{and}$$
\begin{eqnarray}\label{Hessphi}
X_iX_j\phi(x)&=&4\left\|v(x)\right\|^2\delta_{ij}+8\left\langle v(x),X_i\right\rangle\left\langle v(x),X_j\right\rangle+16\left\langle z(x),[X_i,X_j]\right\rangle+\nonumber \\
&+&8\sum_{k=1}^n\left\langle J_{Z_k}(v(x)),X_i\right\rangle\left\langle J_{Z_k}(v(x)),X_j\right\rangle.
\end{eqnarray}
By the first equality and the first property in (\ref{jayz}), we get the relation $\left\|\xgrad\phi(x)\right\|^2=16\left\|v(x)\right\|^2\phi(x)$, which implies
\begin{equation}\label{xgradd}
\left\|\xgrad d(x)\right\|^2=\frac{\left\|v(x)\right\|^2}{d^2(x)}.
\end{equation}
The last relation had already been proved in \cite{DGN} (Lemma 6.3). On the other hand, for $i,j\in\{1,\ldots,m\}$ we get
\begin{eqnarray*}
X_iX_jd(x)&=&-\frac{3}{d(x)}X_id(x)X_jd(x)+\frac{1}{4d^3(x)}X_iX_j\phi(x)=\\
&=&\frac{\left\|\xgrad d(x)\right\|^2\delta_{ij}-3X_id(x)X_jd(x)}{d(x)}+\frac{2}{d^3(x)}\left(\left\langle v(x),X_i\right\rangle\left\langle v(x),X_j\right\rangle\vphantom{\sum_{k=1}^n}\right.\hspace{-0.11cm}+\\
&+&\left.2\left\langle z(x),[X_i,X_j]\right\rangle+\sum_{k=1}^n\left\langle J_{Z_k}(v(x)),X_i\right\rangle\left\langle J_{Z_k}(v(x)),X_j\right\rangle\right).
\end{eqnarray*}
If $A\in\mlL$, since the matrix $(\left\langle z(x),[X_i,X_j]\right\rangle)_{i,j=1}^m$ is skew-symmetric and the product of a symmetric matrix with a skew-symmetric one has zero trace, we have
\begin{eqnarray}\label{Lad}
\Lop_Ad(x)&=&\frac{1}{d(x)}\left\langle \left(\Trac(A(x))\mathbb{I}_m-3A(x)\right)\xgrad d(x),\xgrad d(x)\right\rangle+\nonumber\\
&+&\frac{2}{d^3(x)}\left(\left\langle A(x)V(x),V(x)\right\rangle+\sum_{k=1}^n\left\langle A(x)J_kV(x),J_kV(x)\right\rangle\right),
\end{eqnarray}
where we have denoted the vectors of $\R^m$ $(\left\langle v(x),X_j\right\rangle)_{j=1}^m$ and $(\left\langle J_{Z_k}(v(x)),X_j\right\rangle)_{j=1}^m$ respectively by $V(x)$ an $J_kV(x)$. We are now ready to prove our main Lemma, which is the counterpart of Lemma 3.1 in \cite{GT}. This is also the only part where the Cordes-Landis condition is needed.

\begin{lem}\label{BAR} Fix $0<\l\leq\L$. For any fixed $0<\delta\leq 2$, put $\alpha=Q-\delta$. For any open set $O$ such that $O\subseteq B_1(0)$, we consider the function
$$h(x)=-\frac{1}{\alpha}\int_{O}\left(d(x^{-1}\circ\xi)\right)^{-\alpha}\,d\xi.$$
For $\e>0$, let $\eta_{\e}\in C^{\infty}([0,+\infty))$ such that $0\leq\eta_{\e}\leq1$, $\eta_{\e}(\rho)=0$ for $0\leq\rho<\e$ and $\eta_{\e}(\rho)=1$ for $\rho\geq2\e$. Consider the $C^{\infty}$ function
$$h_{\e}(x)=-\frac{1}{\alpha}\int_{O}\frac{\eta_{\e}(d(x^{-1}\circ\xi))}{d^\alpha(x^{-1}\circ\xi)}\,d\xi$$
which converges uniformly to $h$ as $\e\rightarrow0^+$. Then, for any compact set $O'\subset O$, we have 
\begin{equation}\label{compop}
\Lop_A h_{\e}(x)\geq C\l\qquad\,\,\forall x\in O',
\end{equation}
for every $A\in\mlL$ satisfying the $\delta$-Landis condition and for every $0<2\e<d(O',\de O):=\inf{\{d(\xi^{-1}\circ x)\,:\,x\in O',\xi\in\de O\}}$. The positive constant $C$ has to depend just on $\delta,d$ and the $X_j$'s.
\end{lem}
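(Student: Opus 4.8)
The plan is to reduce the statement to a pointwise computation of $\Lop_A$ applied to the kernel $(d(x^{-1}\circ\xi))^{-\alpha}$, integrated over $O$. First I would observe that since $O'$ is compact and $2\e < d(O',\de O)$, for every $x\in O'$ and every $\xi$ in the support of the integrand in $h_\e$ we have $d(x^{-1}\circ\xi)$ bounded below by a positive constant; hence differentiation under the integral sign is justified and
$$\Lop_A h_\e(x) = -\frac{1}{\alpha}\int_O \Lop_A\!\left(\frac{\eta_\e(d(x^{-1}\circ\xi))}{d^\alpha(x^{-1}\circ\xi)}\right)d\xi,$$
where $\Lop_A$ acts in the $x$ variable. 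On the set $\{d(x^{-1}\circ\xi)\geq 2\e\}$ the cutoff equals $1$, so there the integrand is just $\Lop_A(d(x^{-1}\circ\xi)^{-\alpha})$; the terms coming from the transition region $\e\leq d<2\e$ are harmless because that region is empty for $x\in O'$ by the choice of $\e$. So the whole matter is to compute $\Lop_A\bigl(d^{-\alpha}\bigr)$, using left-invariance to work with $d(x)$ directly.

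Next I would carry out the chain rule. Writing $g=d^{-\alpha}$, one has $X_i X_j g = \alpha(\alpha+1)\,d^{-\alpha-2}X_i d\,X_j d - \alpha\, d^{-\alpha-1}X_i X_j d$, hence
$$\Lop_A g = \alpha\, d^{-\alpha-1}\Bigl((\alpha+1)\,d^{-1}\langle A\,\xgrad d,\xgrad d\rangle - \Lop_A d\Bigr).$$
Now substitute the expression \eqref{Lad} for $\Lop_A d$ together with \eqref{xgradd} for $\|\xgrad d\|^2$. Collecting the terms of the form $\langle A\,\xgrad d,\xgrad d\rangle/d$ gives a coefficient $(\alpha+1) - (\Trac(A) - 3) = \alpha + 4 - \Trac(A)$ in front of $\langle A\xgrad d,\xgrad d\rangle/d$, wait — more carefully, the $d(x)^{-1}$ terms in $\Lop_A d$ are $\Trac(A)\|\xgrad d\|^2/d - 3\langle A\xgrad d,\xgrad d\rangle/d$, so after subtraction the $d^{-1}$-homogeneous part of the bracket is $(\alpha+1)\langle A\xgrad d,\xgrad d\rangle/d - \Trac(A)\|\xgrad d\|^2/d + 3\langle A\xgrad d,\xgrad d\rangle/d = (\alpha+4)\langle A\xgrad d,\xgrad d\rangle/d - \Trac(A)\|\xgrad d\|^2/d$. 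The remaining piece is $-\tfrac{2}{d^3}\bigl(\langle AV,V\rangle + \sum_k\langle AJ_kV,J_kV\rangle\bigr)$, and here I would use that $\|V(x)\|^2 = \|v(x)\|^2$, that each $J_k$ is orthogonal so $\|J_kV(x)\|^2 = \|v(x)\|^2$ as well (from \eqref{jayz}), and that \eqref{xgradd} says $\|\xgrad d\|^2 = \|v\|^2/d^2$. Thus $-\tfrac{2}{d^3}\langle AV,V\rangle \geq -\tfrac{2\L}{d^3}\|v\|^2 = -2\L\|\xgrad d\|^2/d$ and likewise $-\tfrac{2}{d^3}\sum_k\langle AJ_kV,J_kV\rangle \geq -2n\L\|\xgrad d\|^2/d$. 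Since $\min_{\|\xi\|=1}\langle A\xi,\xi\rangle \geq \l$ and the bracket multiplying $\langle A\xgrad d,\xgrad d\rangle$ is $(\alpha+4)$ with $\alpha = Q-\delta$, the bracket in $\Lop_A g$ is bounded below by
$$\frac{\|\xgrad d\|^2}{d}\Bigl((Q+4-\delta)\,\frac{\langle A\xgrad d,\xgrad d\rangle}{\|\xgrad d\|^2} - \Trac(A) - (2n+\dots)\L\Bigr);$$
here one must keep the $\max$-$\min$ structure rather than crude $\l,\L$ bounds, because $2n = Q-m$ and the term $(Q+2-m)\max_{\|\xi\|=1}\langle A\xi,\xi\rangle$ in \eqref{deltaLandis} is exactly what is needed to absorb $\Trac(A)$ plus the $J_kV$ contributions. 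Being careful, one gets a lower bound of the form $c_0\,d^{-\alpha-1}\,d^{-1}\|\xgrad d\|^2\cdot\bigl((Q+4-\delta)\lambda_{\min}(A) - \Trac(A) - (Q+2-m)\lambda_{\max}(A)\bigr)$, and the $\delta$-Landis condition \eqref{deltaLandis} makes the last factor $\geq 0$. To actually get the strictly positive lower bound $C\l$ I would instead not throw away all the slack: rewrite the factor as $\geq (Q+4-\delta)\lambda_{\min}(A) - \bigl(\Trac(A)+(Q+2-m)\lambda_{\max}(A)\bigr) + (\text{leftover nonnegative terms})$ — actually the cleanest route is to note \eqref{deltaLandis} gives $\Trac(A)+(Q+2-m)\lambda_{\max}(A)\leq(Q+4-\delta)\lambda_{\min}(A)$ which when inserted yields the bracket $\geq 0$; to get $C\l$ one uses that on $O'$, with $x^{-1}\circ\xi$ ranging over a fixed compact subset of $\G\setminus\{0\}$ (bounded away from $0$ and, after restricting to the relevant part of $O$, bounded above), the quantities $d(x^{-1}\circ\xi)$ and $\|\xgrad d(x^{-1}\circ\xi)\|^2$ are bounded below by positive constants depending only on $\delta$, $d$ and the $X_j$'s, so that the integrand is $\geq c_1\l$ on a set of positive measure and the overall integral, divided by $\alpha$, is $\geq C\l$.

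The main obstacle I anticipate is precisely the bookkeeping in the last paragraph: one must be scrupulous to retain the operator norm (largest eigenvalue) and smallest eigenvalue of $A(x)$ separately, since replacing everything by $\l$ and $\L$ would demand $\L/\l < (Q+3)/(Q+1)$ rather than the weaker \eqref{CordesLandis}, and one must verify that $\|\xgrad d\|\neq 0$ on the relevant domain — but $\|\xgrad d(x^{-1}\circ\xi)\|^2 = \|v(x^{-1}\circ\xi)\|^2/d^2(x^{-1}\circ\xi)$ vanishes exactly on the set where the horizontal component $v$ vanishes, i.e. on the "vertical" directions, so one needs that $h_\e$ inherits positivity of $\Lop_A$ even though the leading quadratic form degenerates there. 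This is handled by observing that where $\|\xgrad d\|$ is small the whole expression is small but still $\geq 0$ (not $\geq C\l$), and the strictly positive contribution $C\l$ comes from integrating over the portion of $O$ where $v(x^{-1}\circ\xi)$ stays away from $0$; since $O$ has nonempty interior and we may, after shrinking, assume it contains a small ball on which this holds uniformly, the integral picks up a definite positive amount. A secondary, purely technical, point is justifying the uniform convergence $h_\e\to h$ and differentiation under the integral sign: both follow from the local integrability of $d^{-\alpha}$ (as $\alpha = Q-\delta < Q$) together with the fact that for $x\in O'$ all the integration variables stay in a region where the singularity is not seen.
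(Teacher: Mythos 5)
Your pointwise computation of $\Lop_A(d^{-\alpha})$, and the way you keep $\max_{\|\xi\|=1}\langle A\xi,\xi\rangle$ and $\min_{\|\xi\|=1}\langle A\xi,\xi\rangle$ separate so that the $\delta$-Landis condition (\ref{deltaLandis}) controls $(\alpha+4)\langle A\xgrad d,\xgrad d\rangle$ against $\Trac(A)+(2+2n)\L_{A_x}$ via $\|J_kV\|=\|v\|$, is exactly the algebra the proof needs, and that part is sound. But the way you extract the strictly positive bound $C\l$ fails, for three compounding reasons. First, the sign: your own computation shows $\Lop_A\bigl(d^{-\alpha}(x^{-1}\circ\xi)\bigr)\geq 0$ for $\xi\neq x$ under the Landis condition (here $\alpha=Q-\delta\geq Q-2$ matters), hence the smooth part of the integrand of $\Lop_A h_\e$, namely $-\frac{1}{\alpha}\Lop_A(d^{-\alpha})$, is $\leq 0$; integrating it over any portion of $O$ produces a nonpositive contribution, so there is no ``set of positive measure where the integrand is $\geq c_1\l$''. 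Second, the transition region $\{\xi:\e\leq d(x^{-1}\circ\xi)\leq 2\e\}$ is not empty for $x\in O'$: the hypothesis $2\e<d(O',\de O)$ guarantees precisely that this region is \emph{contained in} $O$ (it always surrounds the singular point $\xi=x$), and it is exactly there that the positive mass of $\Lop_A h_\e$ is concentrated; dismissing it as harmless removes the only source of positivity. Third, even with the correct sign, a bound obtained from the bulk of $O$ would depend on $|O|$ and on where $v(x^{-1}\circ\xi)$ stays away from zero, whereas the lemma requires $C$ to depend only on $\delta$, $d$ and the $X_j$'s — in the application $O=\{w<0\}$ may be an arbitrarily small set, so ``shrinking to a small ball inside $O$'' cannot yield a uniform constant.

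The paper's mechanism circumvents all three issues: one writes $\int_O=\int_{B_{2K}(x)}-\int_{B_{2K}(x)\smallsetminus O}$, uses left-invariance and the inverse-invariance of Lebesgue measure to reduce the full-ball term to $\int_{B_{2K}(0)}X_iX_jg_\e(\xi)\,d\xi$, and applies the divergence theorem (the $X_i$ are divergence-free) to convert it into a surface integral over $\de B_{2K}(0)$, where $g_\e=g$ is explicit; recognizing the mean-value kernel $\psi_0=\|\xgrad d\|^2/\|\nabla d\|$ and invoking (\ref{somean}) yields the fixed positive quantity $\frac{\l}{\beta}(2K)^{Q-2-\alpha}$, independent of $O$ and of $\e$. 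The leftover integral over $B_{2K}(x)\smallsetminus O$ then enters with the favourable sign and is nonnegative by precisely the pointwise Landis computation you carried out. Without this decomposition (or an equivalent distributional analysis of the cutoff region near the singularity), the argument does not produce a positive constant, let alone one independent of $O$.
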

\begin{proof} Put $g(\xi)=-\frac{1}{\alpha}d^{-\alpha}(\xi)$ and $g_\e(\xi)=g(\xi)\eta_{\e}(d(\xi))$. By the symmetry of $d$, these functions are symmetric, i.e. $g(\xi^{-1})=g(\xi)$ and $g_\e(\xi^{-1})=g_\e(\xi)$. Thus, we have
$$h_{\e}(x)=\int_{O}g_\e(x^{-1}\circ\xi)\,d\xi=\int_{O}g_\e(\xi^{-1}\circ x)\,d\xi.$$
We note that, for $x\in\buz$, we have $\buz\subseteq B_{2K}(x)$ where $K\geq 1$ is the constant appearing in the pseudo-triangle inequality for $d$. The smoothness of $g_\e$ and the left-invariance of the vector fields imply, for every $i,j=1,\ldots,m$, that
\begin{eqnarray*}
X_iX_jh_{\e}(x)&=&\int_{O}\left(X_iX_jg_\e(\xi^{-1}\circ\cdot)\right)(x)d\xi=\int_{O}\left(X_iX_jg_\e\right)(\xi^{-1}\circ x)d\xi=\\
&=&\int_{B_{2K}(x)}X_iX_jg_\e(\xi^{-1}\circ x)d\xi-\int_{\BmO}X_iX_jg_\e(\xi^{-1}\circ x)d\xi=\\
&=&\int_{B_{2K}(0)}X_iX_jg_\e(\xi^{-1})d\xi-\int_{\BmO}X_iX_jg_\e(\xi^{-1}\circ x)d\xi.
\end{eqnarray*}
Since $d\xi$ is also inversely invariant and the balls are symmetric, we get
\begin{eqnarray*}
X_iX_jh_{\e}(x)&=&\int_{B_{2K}(0)}X_iX_jg_\e(\xi)d\xi-\int_{\BmO}X_iX_jg_\e(\xi^{-1}\circ x)d\xi=\\
&=&\int_{\de B_{2K}(0)}X_jg_\e(\xi)\frac{X_id(\xi)}{\left\|\nabla d(\xi)\right\|}d\sigma(\xi)-\int_{\BmO}X_iX_jg_\e(\xi^{-1}\circ x)d\xi,
\end{eqnarray*}
where the last equality is justified by the divergence theorem: the vector fields $X_i$'s are indeed divergence-free because of the $\dl$-homogeneity (see \cite{BLU}, Remark 1.3.7). Assume now $0<\e<1$. Then, if $\xi$ belongs to a small open neighborhood of $\de B_{2K}(0)$, $g_\e(\xi)=g(\xi)$. Moreover, let $O'$ be a compact subset of $O$ such that $0<2\e<d(O',\de O)$. If $x\in O'$ and $\xi\in\BmO$, then $g_\e(\xi^{-1}\circ x)=g(\xi^{-1}\circ x)$. Thus, for $x\in O'$, we get
\begin{eqnarray*}
X_iX_jh_{\e}(x)\hspace{-0.24cm}&=&\int_{\de B_{2K}(0)}X_jg(\xi)\frac{X_id(\xi)}{\left\|\nabla d(\xi)\right\|}d\sigma(\xi)-\int_{\BmO}\hspace{-0.12cm}{X_iX_jg(\xi^{-1}\circ x)d\xi}=\\
&=&\int_{\de B_{2K}(0)}\frac{X_jd(\xi)X_id(\xi)}{(2K)^{\alpha+1}\left\|\nabla d(\xi)\right\|}d\sigma(\xi)-\int_{\BmO}\frac{X_iX_jd(\xi^{-1}\circ x)}{\left(d(\xi^{-1}\circ x)\right)^{\alpha+1}}d\xi+\\
&+&(\alpha+1)\int_{\BmO}\frac{X_id(\xi^{-1}\circ x)X_jd(\xi^{-1}\circ x)}{\left(d(\xi^{-1}\circ x)\right)^{\alpha+2}}d\xi.
\end{eqnarray*}
Hence, for $A\in\mlL$ and for $x\in O'$, we have
\begin{eqnarray*}
\Lop_A h_{\e}(x)&=&\int_{\de B_{2K}(0)}\frac{\left\langle A(x)\xgrad d(\xi),\xgrad d(\xi)\right\rangle}{(2K)^{\alpha+1}\left\|\nabla d(\xi)\right\|}d\sigma(\xi)+\\
&-&\int_{\BmO}\frac{\sum_{i,j=1}^m a_{ij}(x)X_iX_jd(\xi^{-1}\circ x)}{\left(d(\xi^{-1}\circ x)\right)^{\alpha+1}}d\xi+\\
&+&(\alpha+1)\int_{\BmO}\frac{\left\langle A(x)\xgrad d(\xi^{-1}\circ x),\xgrad d(\xi^{-1}\circ x)\right\rangle}{\left(d(\xi^{-1}\circ x)\right)^{\alpha+2}}d\xi\geq\\
&\geq&\frac{\l}{(2K)^{\alpha+1}}\int_{\de B_{2K}(0)}\hspace{-0.29cm}\frac{\left\|\xgrad d(\xi)\right\|^2}{\left\|\nabla d(\xi)\right\|}d\sigma(\xi)-\int_{\BmO}\hspace{-0.12cm}\frac{(\Lop_{A_\xi}d)(\xi^{-1}\circ x)}{\left(d(\xi^{-1}\circ x)\right)^{\alpha+1}}d\xi+\\
&+&(\alpha+1)\int_{\BmO}\frac{\left\langle A(x)\xgrad d(\xi^{-1}\circ x),\xgrad d(\xi^{-1}\circ x)\right\rangle}{\left(d(\xi^{-1}\circ x)\right)^{\alpha+2}}d\xi,
\end{eqnarray*}
where $A_\xi(x)=A(\xi\circ x)$. By recognizing that the term $\frac{\left\|\xgrad d(\xi)\right\|^2}{\left\|\nabla d(\xi)\right\|}=\psi_0(\xi)$ and using (\ref{somean}), we have
\begin{eqnarray*}
\Lop_Ah_\e(x)&\geq&\frac{\l}{\beta}(2K)^{Q-2-\alpha}-\int_{\BmO}\frac{(\Lop_{A_\xi}d)(\xi^{-1}\circ x)}{\left(d(\xi^{-1}\circ x)\right)^{\alpha+1}}d\xi+\\
&+&(\alpha+1)\int_{\BmO}\frac{\left\langle A(x)\xgrad d(\xi^{-1}\circ x),\xgrad d(\xi^{-1}\circ x)\right\rangle}{\left(d(\xi^{-1}\circ x)\right)^{\alpha+2}}d\xi.
\end{eqnarray*}
Exploiting (\ref{Lad}) and denoting by $\L_{A_x}=\max_{\left\|\xi\right\|=1}{\left\langle A(x)\xi,\xi\right\rangle}$, we deduce that
\begin{eqnarray*}
\Lop_A h_{\e}(x)\hspace{-0.18cm}&\geq&\frac{\l}{\beta}(2K)^{Q-2-\alpha}-2\L_{A_x}\int_{\BmO}\hspace{-1cm} \frac{\left\|V(\xi^{-1}\circ x)\right\|^2+\sum_{k=1}^n\left\|J_kV(\xi^{-1}\circ x)\right\|^2}{\left(d(\xi^{-1}\circ x)\right)^{\alpha+4}}d\xi\\
&+&\int_{\BmO}\hspace{-0.64cm}\frac{\left\langle \left((\alpha+4)A(x)-\Trac(A(x))\mathbb{I}_m\right)\xgrad d(\xi^{-1}\circ x),\xgrad d(\xi^{-1}\circ x)\right\rangle}{\left(d(\xi^{-1}\circ x)\right)^{\alpha+2}}d\xi\\
&=&\frac{\l}{\beta}(2K)^{Q-2-\alpha}-\int_{\BmO}\hspace{-0.27cm}\frac{\left(\vphantom{\sum_{k=1}^{N^2}}(2+2n)\L_{A_x}+\Trac(A(x))\right)\left\|v(\xi^{-1}\circ x)\right\|^2}{\left(d(\xi^{-1}\circ x)\right)^{\alpha+4}}d\xi\\
&+&(\alpha+4)\int_{\BmO}\hspace{-0.74cm}\frac{\left\langle A(x)\frac{\xgrad d(\xi^{-1}\circ x)}{\left\|\xgrad d(\xi^{-1}\circ x)\right\|},\frac{\xgrad d(\xi^{-1}\circ x)}{\left\|\xgrad d(\xi^{-1}\circ x)\right\|}\right\rangle\left\|v(\xi^{-1}\circ x)\right\|^2}{\left(d(\xi^{-1}\circ x)\right)^{\alpha+4}}d\xi,
\end{eqnarray*}
where in the last equality we have used the second property in (\ref{jayz}) and the orthonormality of the basis $X_1,\ldots,X_m,Z_1,\ldots,Z_n$. Assuming that $A$ satisfies the condition (\ref{deltaLandis}) for the $\delta$ we have fixed, then for any unit vector $\zeta$ we have
$$(\alpha+4)\left\langle A(x)\zeta,\zeta\right\rangle=(Q+4-\delta)\left\langle A(x)\zeta,\zeta\right\rangle\geq\Trac(A(x))+(2+2n)\L_{A_x}$$
uniformly in $x$. Therefore we finally get
$$\Lop_A h_\e(x)\geq\frac{\l}{\beta(2K)^{\alpha+2-Q}}=\frac{\l}{\beta(2K)^{2-\delta}}$$
for any $x\in O'$.
\end{proof}

We stress that the uniform convergence of $h_\e$ (and the resulting continuity of $h$) is given by the condition $\alpha<Q$. Moreover, as it is highlighted in \cite{GT}, for any $x\in\G$ we have
$$h(x)\geq-\frac{1}{\alpha}\int_{B_\rho(x)}\left(d(x^{-1}\circ\xi)\right)^{-\alpha}\,d\xi,$$
where $\rho$ is such that $\left|B_\rho\right|=\left|O\right|$. By the behavior of the Lebesgue measure under translations and dilations, for such $\rho$ we get
\begin{equation}\label{compmes}
0\geq h(x)\geq-\frac{\rho^{Q-\alpha}}{\alpha}\int_{\buz}\left(d(\xi)\right)^{-\alpha}\,d\xi=:-\gamma\left|O\right|^{1-\frac{\alpha}{Q}}.
\end{equation}
By keeping in mind the arguments in \cite{GT} (Theorem 3.2-3.3), we conclude the proof of Theorem \ref{cecd}.

\begin{proof}[Proof of Theorem \ref{cecd}] Fix $0<\l\leq\L$ and $A\in\mlL$ satisfying the $\delta$-Landis condition. Take $u\in\kA(B_2(0))$ and suppose there exists a point $\overline{x}$ in $B_{\frac{1}{2}}(0)$ where $u$ is less than $\frac{1}{2}$. We want to prove that 
\begin{equation}\label{want}
\left|\{x\in\buz\,:\,u(x)< 1\}\right|\geq \e\left|\buz\right|
\end{equation}
for some $0<\e<1$ depending just on $\l,\L,\delta$ and the structural constants of $\G$. Thus we will prove that $\kA$ satisfies the statement of the critical density property for $R=1$ and $x_0=0$. This is enough since for generic $R$ and $x_0$ we can exploit the left invariance and the homogeneity of the vector fields $X_j$'s. As a matter of fact, for any $v\in\kA(\dball)$ we can consider the function $u(x):=v(x_0\circ\delta_R(x))$ which belongs to $\kom^{\tilde{A}}(B_2(0))$ where $\tilde{A}(x)=A(\delta_{\frac{1}{R}}(x_0^{-1}\circ x))$. Therefore the critical density holds true with the same $\e$ and the same $c=\frac{1}{2}$ since we have $\left|\ball\right|=R^Q\left|\buz\right|$, 
$\left|\{x\in\ball\,:\,v(x)\geq 1\}\right|=R^Q\left|\{x\in \buz\,:\,u(x)\geq 1\}\right|$ and also $\inf_{B_{\frac{1}{2}}(0)}{u}=\inf_{\hball}{v}$.\\
In order to prove (\ref{want}), we use the barrier of the previous lemma and an auxiliar function involving $\phi=d^4$. In our notations, by (\ref{Hessphi}) we get
\begin{eqnarray*}
\Lop_A\phi(x)\hspace{-0.14cm}&=&\hspace{-0.2cm}4\left\|v(x)\right\|^2\Trac(A(x))+8\left\langle A(x)V(x),V(x)\right\rangle+8\sum_{k=1}^n\left\langle A(x)J_kV(x),J_kV(x)\right\rangle\\
&\leq&4\L(m+2+2n)\left\|v(x)\right\|^2\leq4(Q+2)\L
\end{eqnarray*}
for any $x\in\buz$. If $C$ is the positive constant in (\ref{compop}), we set
$$w(x)=\frac{C\l}{8(Q+2)\L}(u(x)+\phi(x)-1).$$
By the hypothesis $\Lop_A u\leq0$, hence we have $\Lop_A w\leq\frac{C}{2}\l$ in $\buz$. Moreover, $w$ is nonnegative on $\de\buz$ and
$$w(\overline{x})\leq\frac{C\l}{8(Q+2)\L}\left(\frac{1}{2}+\frac{1}{2^4}-1\right)=-\frac{7C}{128(Q+2)}\frac{\l}{\L}.$$
We put $O:=\{x\in\buz\,:\,w(x)<0\}$. We remark that $O$ is an open set such that $O\subseteq\{x\in\buz\,:\,u(x)< 1\}$. This set is non-empty since $\overline{x}\in O$. With this choice of $O$, we build the barrier $h$ of Lemma \ref{BAR} and we consider the continuous function $h-w$. We claim that $h-w\leq 0$ in $O$. We already know that this inequality holds true on $\de O$ since $w\geq 0$ on $\de\buz$. Suppose by contradiction that there exists $\xi_0\in O$ such that $h(\xi_0)-w(\xi_0)=2\delta>0$. Of course, this implies the existence of $\e_0>0$ such that $h_\e(\xi_0)-w(\xi_0)\geq\delta$ if $\e\leq\e_0$. Now, for any compact set $O'\subset O$ containing $\xi_0$, by Picone's maximum principle we would get $\max_{\de O'}{(h_\e-w)}\geq\delta$ if $\e<\min{\{\frac{1}{2}d(O',\de O),\e_0\}}$ since $\Lop_A(h_\e-w)\geq\frac{C}{2}\l$ in $O'$. Letting $\e\rightarrow0^+$, we deduce $\max_{\de O'}{(h-w)}\geq\delta$ for any $O'$ which is a contradiction since $h-w\leq 0$ on $\de O$. Thus we have proved the claim. In particular we get
$$-\frac{7C}{128(Q+2)}\frac{\l}{\L}\geq w(\overline{x})\geq h(\overline{x})\geq -\gamma\left|O\right|^{1-\frac{\alpha}{Q}}$$
by the relation (\ref{compmes}). Therefore we have
$$\left|\{x\in\buz\,:\,u(x)< 1\}\right|\geq\left|O\right|\geq\left(\frac{7C}{128\gamma(Q+2)}\frac{\l}{\L}\right)^{\frac{Q}{\delta}}=:\e\left|\buz\right|$$
and the theorem is proved.
\end{proof}

\section{Examples and further comments}\label{excom}

As remarked by Kaplan in \cite{K} (Corollary 1), there exist H-type algebras with center of any given dimension. We want to show here a representative class. In \cite{BLU} (Proposition 3.2.1) it is proved that a Lie group on $\RNN=\R^{m+n}$, which is homogeneous with respect to the dilations $\dl(x,t)=(\l x,\l^2 t)$ for $x\in\R^m$ and $t\in\RN$, is equipped with the composition law $\circ$ given by
\begin{equation}\label{grouptwo}
(x,t)\circ(x_1,t_1)=\left(x+x_1,t+t_1+\frac{1}{2}\left\langle Bx,x_1\right\rangle\right)\qquad\mbox{for }\, (x,t),(x_1,t_1)\in\RNN.
\end{equation}
Here we have denoted by $\left\langle Bx,x_1\right\rangle$ the vector of $\RN$ whose components are $\left\langle B^kx,x_1\right\rangle$ (for $k=1,\ldots,n$) for some suitable $m\times m$ matrices $B^1,\ldots,B^n$. According to \cite{BLU} (Definition 3.6.1), such a group is called prototype group of H-type if the following properties are satisfied:
\begin{itemize}
\item[$\cdot$] $B^j$ is skew-symmetric and orthogonal for any $j\leq n$;
\item[$\cdot$] $B^iB^j=-B^jB^i$ for every $i,j\in\{1,\ldots,n\}$ with $i\neq j$.
\end{itemize}
This class of homogeneous Lie groups belongs to the class of H-type groups and any H-type group is isomorphic to one of these (\cite{BLU}, Theorem 18.2.1). Consider the vector fields
$$X_j(x,t)=\de_{x_j}+\frac{1}{2}\sum_{k=1}^{n}(B^k x)_j\de_{t_k}\qquad\mbox{ for } j=1,\ldots,m.$$
In the literature, this basis is also called the Jacobian basis of $\g$ (for a definition we refer to \cite{BLU}, Proposition 1.2.16) because it comes from the Jacobian matrix of the left-translation. In this case, the standard inner product on $\g$ with respect to the basis $$X_1,\ldots,X_m,\frac{\de}{\de t_1}\ldots,\frac{\de}{\de t_n}$$
induces on $\g$ the structure of H-type algebra. The homogeneous norm $d$ we have exploited in the previous sections becomes here $\left\|x\right\|^4+16\left\|t\right\|^2$ (see e.g. \cite{BLU}, Remark 18.3.3).

\begin{ex} The Heisenberg-Weyl group is also a particular prototype group of H-type. As a matter of fact, the case of $m=2k$, $n=1$ and
$$B=B^1=\left(\begin{array}{cc}
0 & -\mathbb{I}_k  \\
\mathbb{I}_k & 0	
\end{array}\right)$$
turns out to be exactly the case of the Heisenberg group $\mathbb{H}^k$ in $\R^{2k+1}$. The vector fields $X_j$'s become the usual generators
$X_j=\de_{x_j}-\frac{x_{j+k}}{2}\de_{t}$ and $X_{j+k}=\de_{x_{j+k}}+\frac{x_j}{2}\de_{t}$, $j\in\{1,\ldots,k\}$. Other explicit examples of prototype groups (with $n\geq2$) are given in \cite{BLU} (Remark 3.6.6).
\end{ex}

We want to remark explicitly the importance of the orthonormality assumption of the horizontal basis $X_1,\ldots,X_m$ in our result. As a matter of fact, the condition (\ref{CordesLandis}) or the condition on the bound of $\frac{\L}{\l}$ is not stable under a change of the horizontal basis but it does depend on that choice. Furthermore, the orthonormality has been deeply used since this allows us to make the "right" choice of the gauge function related to the sub-Laplacian. Let us make an example. Let us consider the Lie group on $\R^5$ with the composition law as in (\ref{grouptwo}) and
$$B=B^1=\left(\begin{array}{cccc}
0 & -1 & 0 & 0 \\
1 & 0 & 0	& 0 \\
0 & 0 & 0	& -2 \\
0 & 0 & 2	&  0
\end{array}\right)$$
Here $m=4$ and $n=1$. This group is well-studied in the literature. Since this group is isomorphic to $\mathbb{H}^2$, it is in particular an H-type group but it is not a prototype one. In \cite{BT} (Example 6.6) Balogh and Tyson gave an explicit expression for the fundamental solution of the canonical sublaplacian $\Delta_{\G}=\sum_{j=1}^4X_j^2$, where $X_j=\de_{x_j}+\frac{1}{2}(B x)_j\de_{t}$ ($j=1,\ldots,4$) are the horizontal fields of the Jacobian basis. If we look at that formula, we can recognize it is different from being a power of $\left\|v(x)\right\|^4+16\left\|z(x)\right\|^2$. In \cite{B} Bonfiglioli proved that the gauge function associated to $\Delta_\G$ is not even horizontally convex. The problem is that the Jacobian basis is not orthonormal with respect to the scalar product inducing in this group the structure of H-type group. Thus, if we want to apply our result on the horizontally elliptic operator in this group $\Lop_A=\sum_{i,j=1}^ma_{ij}(x,t)X_iX_j$ we need that our Cordes-Landis condition is satisfied not for the matrix $A$ but for the matrix $D^tA(x,t)D$ where $D$ brings an orthonormal basis in $\{X_1,\ldots,X_m\}$. It is easy to verify that, in this situation, the basis $X_1, X_2, \frac{1}{\sqrt{2}}X_3, \frac{1}{\sqrt{2}}X_4$ is orthonormal.

\bibliographystyle{amsalph}

\begin{thebibliography}{A}

\bibitem{BT} \textsc{Z.M. Balogh, J.T. Tyson}, Polar coordinates in Carnot groups, \emph{Math. Z.}, \textbf{241} (2002), 697-730.

\bibitem{B} \textsc{A. Bonfiglioli}, Lifting of convex functions on Carnot groups and lack of convexity for a gauge function, \emph{Arch. Math.}, \textbf{93} (2009), 277-286.

\bibitem{BLU} \textsc{A. Bonfiglioli, E. Lanconelli, F. Uguzzoni}, \emph{Stratified Lie Groups and Potential Theory for their Sub-Laplacians}, Springer Monographs in Mathematics, Springer, Berlin, 2007.

\bibitem{C} \textsc{L.A. Caffarelli}, Interior a priori estimates for solutions of fully non-linear equations, \emph{Ann. of Math. (2)}, \textbf{130} (1989), 189-213.

\bibitem{CG} \textsc{L.A. Caffarelli, C.E. Gutiérrez}, Properties of the solutions of the linearized Monge-Ampère equation, \emph{Amer. J. Math.}, \textbf{119} (1997), 423-465.

\bibitem{DGN} \textsc{D. Danielli, N. Garofalo, D.M. Nhieu}, Notions of convexity in Carnot groups, \emph{Comm. Anal. Geom.}, \textbf{11} (2003), 263-341.

\bibitem{DDFM} \textsc{G. Di Fazio, A. Domokos, M.S. Fanciullo, J.J. Manfredi}, Subelliptic Cordes estimates in the Gru{\v{s}}in plane, \emph{Manuscripta Math.}, \textbf{120} (2006), 419-433.

\bibitem{FGL} \textsc{G. Di Fazio, C.E. Gutiérrez, E. Lanconelli}, Covering theorems, inequalities on metric spaces and applications to pde's, \emph{Math. Ann.}, \textbf{341} (2008), 255-291.

\bibitem{DM} \textsc{A. Domokos, J.J. Manfredi}, Subelliptic Cordes estimates, \emph{Proc. Amer. Math. Soc.}, \textbf{133} (2005), 1047-1056.

\bibitem{G} \textsc{C.E. Gutiérrez}, \emph{The Monge-Ampère equation}, Progress in Nonlinear Differential Equations and their Applications, Birkh\"auser, Boston, 2001.

\bibitem{GT} \textsc{C.E. Gutiérrez, F. Tournier}, Harnack inequality for a degenerate elliptic equation, \emph{Comm. Partial Differential Equations}, \textbf{36} (2011), 2103-2116.

\bibitem{K} \textsc{A. Kaplan}, Fundamental solutions for a class of hypoelliptic PDE generated by
	composition of quadratic forms, \emph{Trans. Amer. Math. Soc.}, \textbf{258} (1980), 147-153.

\bibitem{KrSa} \textsc{N.V. Krylov, M.V. Safonov}, A certain property of solutions of parabolic equations with measurable coefficients, \emph{Izv. Akad. Nauk SSSR Ser. Mat.}, \textbf{44} (1980), 161-175.

\bibitem{L} \textsc{E.M. Landis}, Harnack's inequality for second order elliptic equations of Cordes type, \emph{Dokl. Akad. Nauk SSSR}, \textbf{179} (1968), 1272-1275.

\bibitem{La} \textsc{E.M. Landis}, \emph{Second order equations of elliptic and parabolic type}, Translations of Mathematical Monographs, Amer. Math. Soc., Providence, 1998.

\bibitem{Tr} \textsc{G. Tralli}, Double Ball Property for non-divergence horizontally elliptic operators on step two Carnot groups, \emph{Atti Accad. Naz. Lincei Cl. Sci. Fis. Mat. Natur. Rend. Lincei (9) Mat. Appl.}, \textbf{23} (2012), 351-360.

\end{thebibliography}

\end{document}